\let\oldmarginpar\marginpar
\renewcommand\marginpar[1]{\-\oldmarginpar[\raggedleft\footnotesize #1]%
{\raggedright\footnotesize #1}}
\let\et=\etexdraw
\def\etexdraw{\drawbb\et}
\theoremstyle{plain}
\newtheorem{thm}{Theorem}[section]
\newtheorem{thm*}{Theorem}
\newtheorem{lem}[thm]{Lemma}
\newtheorem{prop*}[thm*]{Proposition}
\theoremstyle{definition}
\newtheorem{defi}[thm]{Definition}
\theoremstyle{remark}
\newtheorem{rmk}[thm]{Remark}
\begin{document}

\title
[Two interesting examples of $\mathcal{D}$-modules]
{Two interesting examples of $\mathcal{D}$-modules in characteristic $p>0$}

\author{Mordechai Katzman}
\address{Department of Pure Mathematics,
University of Sheffield, Hicks Building, Sheffield S3 7RH, United Kingdom}
\email{M.Katzman@sheffield.ac.uk}

\author{Gennady Lyubeznik}
\address{School of Mathematics, University of Minnesota, Minneapolis, MN, 55455, USA}
\email{gennady@math.umn.edu}

\author{Wenliang Zhang}
\address{Department of Mathematics, University of Michigan, Ann Arbor, MI, 48109, USA}
\email{wlzhang@umich.edu}
\subjclass[2000]{Primary 13N10, 13A35}
\keywords{holonomic $\mathcal{D}$-modules, characteristic $p$}
\date{}


\begin{abstract}
We provide two examples of $\mathcal{D}$-modules in prime characteristic $p$ which answer two open problems in \cite{Lyubeznik} in the negative.
\end{abstract}

\maketitle


\section{Introduction}

Let $K$ be a field, let $R=K[x_1,\dots,x_n]$ be the ring of polynomials in $x_1,\dots,x_n$ over $K$ and let $\mathcal D$ be the ring of $K$-linear differential operators over $K$. In a remarkable paper \cite{Bavula} V. Bavula gave a characteristic-free definition of holonomic $\mathcal D$-modules. 
In characteristic zero his definition coincides with the usual one.  He proved, among other things, that his holonomic modules have one of the most important properties known from the characteristic zero case, namely, their length in the category of $\mathcal D$-modules is finite.

Using Bavula's ideas Lyubeznik \cite{Lyubeznik} gave a characteristic-free proof that $R_f$, for every non-zero element $f\in R$, is holonomic. This provided the first characteristic-free proof of the well-known fact that $R_f$ has finite length in the category of $\mathcal D$-modules.

In view of these developments it is interesting to see whether in characteristic $p>0$ holonomic modules, as defined by Bavula, have other properties known from the characteristic zero case.

Bavula proved that a submodule and a quotient module of a holonomic $\mathcal D$-module are holonomic. But in characteristic 0 it is also true that an extension of two holonomic modules is holonomic. Does this property hold in characteristic $p>0$ as well?

Let $\mathcal F_0\subset \mathcal F_1\subset\dots$ be the Bernstein filtration on $\mathcal D$, let $M$ be a holonomic $\mathcal D$-module generated by a finite set of elements $m_1\dots,m_s\in M$ and let $M_0\subset M_1\subset\dots$ be the filtration on $M$ defined by $M_i=\mathcal F_im_1+\cdots+\mathcal F_im_s$. In characteristic 0 it is well-known that dim$_kM_i$, for $i>>0$, is a polynomial in $i$ of degree $n$; in particular, lim$_{i\to\infty}\frac{{\rm dim}M_i}{i^n}$ exists and is finite. Does this property hold in characteristic $p>0$ as well?

These two questions were raised in the last section of \cite{Lyubeznik}. In this paper we give counter-examples to both of them. In Section 3 we produce a non-holonomic extension of two holonomic modules in characteristic $p>0$ and in Section 4 we produce a holonomic $\mathcal D$-module in characteristic $p>0$ such that the function dim$_kM_i$ is very far from a polynomial and in particular, lim$_{i\to\infty}\frac{{\rm dim}M_i}{i^n}$ does not exist.

\section{Preliminaries}
As explained in \cite[Section 2]{Lyubeznik}, a $K$-basis of $\mathcal D$ is the set of products $x_1^{i_1}\cdots x_n^{i_n}D_{t_1,1}\cdots D_{t_n,n}$ where $D_{t,i}=\frac{1}{t!}\frac{\partial^t}{\partial x_i^t}: R\to R$ is the $K[x_1, ...,x_{i-1} ,x_{i+1},...,x_n]$-linear map that sends $x_i^v$ to $\binom{v}{t}x_i^{v-t}$ ($D_{0,i}$ is the identity map) and $i_1,...,i_n,t_1,...,t_n$ range over all the 2n-tuples of non-negative integers. The Bernstein filtration $\mathcal F_0\subset \mathcal F_1\subset\dots$ on $\mathcal D$ is defined by setting $\mathcal F_s$ to be the $K$-linear span of the products $x_1^{i_1}\cdots x_n^{i_n}D_{t_1,1}\cdots D_{t_n,n}$ with $i_1+\cdots+i_n+t_1+\cdots+t_n\leq s$. It is not hard to see that $\mathcal F_i\cdot \mathcal F_j\subset \mathcal F_{i+j}$.

By a $\mathcal D$-module we always mean a left $\mathcal D$-module. By a $K$-filtration on a $\mathcal D$-module $M$ we mean an ascending chain of $K$-vector spaces $M_0\subset M_1\subset \dots$ such that $\cup_iM_i=M$ and $\mathcal F_iM_j\subset M_{i+j}$. Bavula's definition of a holonomic $\mathcal D$-module \cite[p. 185]{Bavula}, as simplified by Lyubeznik \cite[3.4]{Lyubeznik} is the following.

\begin{defi} A $\mathcal D$-module $M$ is holonomic if it has a $K$-filtration $M_0\subset M_1\subset\dots$ such that ${\rm dim}_kM_i\leq Ci^n$ for all $i$, where $C$ is a constant independent of $i$.
\end{defi}

It is straightforward to see that every submodule and every quotient module of a holonomic module are holonomic. Some other properties are that the length of a holonomic module $M$ in the category of $D$-modules is at most $n!C$ \cite[9.6]{Bavula} \cite[3.5]{Lyubeznik} (in particular, the length is finite) and $R_f$, for every $0\ne f\in R$, with its natural $\mathcal D$-module structure, is holonomic \cite[3.6]{Lyubeznik}.

For the rest of this paper $K$ denotes a perfect field of prime characteristic $p$. Let $\mathcal D_s$ be the (left) $R$-submodule of $\mathcal D$ generated by the products $D_{t_1,i_1}\cdots D_{t_n,i_n}$ such that $t_i<p^s$ for every $i$. It is not hard to see that $\mathcal D_s$ is a ring which (viewing $\mathcal D$ as a subring of Hom$_k(R,R)$) is nothing but Hom$_{R^{p^s}}(R,R)$. In particular, $\mathcal D=\cup_s\mathcal D_s$.

Our method of specifying a $\mathcal{D}$-module is as follows:
we start with a sequence of $\{ M^{(i)} \}_{i\geq 0}$ where each $M^{(i)}$ is a $R^{p^i}$-module
and $R^{p^{i+1}}$-linear maps
$\Theta_i : M^{(i+1)} \rightarrow M^{(i)}$ such that the $R^{p^i}$-module maps 
$\psi_i:R^{p^i}\otimes_{R^{p^{i+1}}}M^{(i+1)} \xrightarrow[]{r\otimes m\mapsto r\Theta(m)} M^{(i)}$ are bijective. This induces an $R$-module isomorphism $\phi_i:R\otimes_{R^{p^{i+1}}}M^{(i+1)}=R\otimes_{R^{p^i}}(R^{p^i}\otimes_{R^{p^{i+1}}}M^{(i+1)})\xrightarrow[]{{\rm id}\otimes\psi_i}R\otimes_{R^{p^i}}M^{(i)}$. Clearly, the compositions $\varphi_i=\phi_0\circ\phi_1\circ\cdots\circ\phi_i:R\otimes_{R^{p^{i+1}}}M^{(i+1)}\to M^{(0)}$ are $R$-module isomorphisms. The natural action of $\mathcal D_s$ on $R$ makes $R\otimes_{R^{p^s}}M^{(s)}$ a $\mathcal D_s$-module. This induces a structure of $\mathcal D_s$-module on $M^{(0)}$ via the isomorphism $\varphi_i$. It is not hard to check that if $s<s'$, then the $\mathcal D_{s'}$- and the $\mathcal D_s$-module structures thus defined are compatible with the natural inclusion $\mathcal D_s\subset \mathcal D_{s'}$, i.e. the $\mathcal D_s$-module structure is obtained from the $\mathcal D_{s'}$-module structure via the restriction of scalars. Since $\mathcal D=\cup_s\mathcal D_s$, this gives $M^{(0)}$ a structure of $\mathcal D$-module.

Both examples in this paper are special cases of the construction described in section 1 of \cite{Bogvad}.
Each $M^{(i)}$ is a free $R^{p^i}$-module with free generators $s_1^{(i)}$ and $s_2^{(i)}$,
$\Theta_i(s_1^{(i+1)})=s_1^{(i)}$ and
$\Theta_i(s_2^{(i+1)})=g_i s_1^{(i)} + s_2^{(i)}$
where for all $i\geq 0$, $g_i$ is an element of $Rx^{p^i}$. Since the elements $\Theta_i(s_1^{(i+1)})$ and
$\Theta_i(s_2^{(i+1)})$ generate $M^{(i)}$ as $R^{p^i}$-module, the associated map $\psi_i$ (defined in the preceding paragraph) is surjective. Since $\psi_i$ is a map between two free $R^{p^i}$-modules of rank two, it is bijective.
If we write $\sigma_n=-\sum^n_{r=0}g_r$
the resulting $\mathcal{D}$-module structure on $M$ is given by
\[\partial_{p^n}(f_1,f_2)=(\partial_{p^n}f_1+(\partial_{p^n}\sigma_n)f_2,\partial_{p^n}f_2),\]
for all $n\geq 0$.

Note that we have a short exact sequence of $\mathcal{D}$-modules
\[ 0\to R\xrightarrow{\psi}M\xrightarrow{\phi}R\to 0, \]
where $\psi(f)=(f,0)$ and $\phi(f_1,f_2)=f_2$.
Even though this exact sequence splits in the category of $R$-modules, it does not necessarily split in the category of $\mathcal{D}$-modules.


Our examples below result from a judicious choice of the sequences $\{ g_i \}_{i\geq 0}.$

\section{An example of a non-holonomic extension of holonomic modules}\label{Section: The example}

The main result  in this section is Theorem \ref{Theorem: Holonomicity does not extend}
which answers \cite[Question 1 in \S 4]{Lyubeznik} in the negative.
We do so by analyzing the $\mathcal{D}$-module obtained by setting   $g_r=x^{p^r+p^{2r}}$ for all $r\geq 0$ in
the construction of $\mathcal{D}$-modules described in the introduction.

We start with the following calculation to which we will refer repeatedly.
\begin{lem}\label{Lemma: computation of derivatives}
For any integers $0\leq \alpha \leq \beta$ and $K\geq 0$ we have
$$
\partial_{p^k} \left( x^{p^\alpha} x^{p^\beta} \right)
=
\left\{
\begin{array}{ll}
x^{p^{\alpha}} +  x^{p^{\beta}} & \text{if } k=\alpha=\beta,\\
 x^{p^{\beta}} & \text{if } k=\alpha<\beta,\\
x^{p^{\alpha}} & \text{if } \alpha<\beta=k,\\
x^{p^{\alpha}} & \text{if } p=2, \alpha=\beta=k-1,\\
0 & \text{otherwise.}
\end{array}
\right.
$$
\end{lem}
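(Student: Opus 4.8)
The plan is to reduce the entire statement to the evaluation of a single binomial coefficient modulo $p$. By the description of the $K$-basis of $\mathcal D$ in Section 2, $\partial_{p^k}$ is the divided-power operator with $\partial_{p^k}(x^v)=\binom{v}{p^k}x^{v-p^k}$; since $x^{p^\alpha}x^{p^\beta}=x^{p^\alpha+p^\beta}$, we have
\[
\partial_{p^k}\left(x^{p^\alpha}x^{p^\beta}\right)=\binom{p^\alpha+p^\beta}{p^k}\,x^{\,p^\alpha+p^\beta-p^k},
\]
the coefficient being read in $K$, i.e.\ reduced modulo $p$. Hence the lemma splits into two bookkeeping tasks: decide for which $k$ the residue $\binom{p^\alpha+p^\beta}{p^k}\bmod p$ is nonzero and what that residue is, and then simplify the exponent $p^\alpha+p^\beta-p^k$ in each surviving case.

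For the first task I would apply Lucas' theorem, $\binom{m}{n}\equiv\prod_j\binom{m_j}{n_j}\pmod p$, where $m_j,n_j$ are the base-$p$ digits of $m$ and $n$. The lower entry $n=p^k$ has base-$p$ expansion consisting of a single $1$ in position $k$, so every factor except the $k$-th equals $\binom{\cdot}{0}=1$ and the residue is governed entirely by the $k$-th digit of $m=p^\alpha+p^\beta$. When $\alpha<\beta$ this digit equals $1$ exactly at positions $\alpha$ and $\beta$; thus the residue is $1$ for $k\in\{\alpha,\beta\}$ and $0$ otherwise, and feeding these values of $k$ into $p^\alpha+p^\beta-p^k$ leaves the complementary power of $x$, producing the first block of cases.

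The crux is the diagonal $\alpha=\beta$, where $m=2p^\alpha$ and the answer depends on whether doubling $p^\alpha$ carries in base $p$. For $p>2$ there is no carry: $2p^\alpha$ has the digit $2$ in position $\alpha$, so the only nonzero residue occurs at $k=\alpha$, where Lucas gives $\binom{2}{1}=2$, i.e.\ $2x^{p^\alpha}=x^{p^\alpha}+x^{p^\beta}$; this simultaneously explains why, in characteristic $2$, that very case must collapse. For $p=2$ the digit $2$ carries, $2p^\alpha=p^{\alpha+1}$ becomes a lone $1$ in position $\alpha+1$, and the surviving index migrates to $k=\alpha+1$, which is exactly the separate $p=2$ line of the table. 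I expect this carry phenomenon to be the only genuinely delicate point; I would close by substituting the relevant $k$ into $p^\alpha+p^\beta-p^k$ to identify the surviving monomial in each case, and confirm the vanishing assertions independently via Kummer's theorem, which reads off the exponent of $p$ in $\binom{p^\alpha+p^\beta}{p^k}$ as the number of base-$p$ carries in the addition $p^k+(p^\alpha+p^\beta-p^k)$.
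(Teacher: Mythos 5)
Your method is genuinely different from the paper's, and as a method it is sound. The paper never collapses the product into a single monomial; it instead expands $\partial_{p^k}\left(x^{p^\alpha}x^{p^\beta}\right)$ by the Leibniz rule for divided powers, $\partial_{p^k}(fg)=\sum_{j=0}^{p^k}\partial_j f\,\partial_{p^k-j}g$ (citing \cite[Proposition 2.1]{Lyubeznik}), and uses the fact that $\partial_j\left(x^{p^\alpha}\right)$ survives only for $j=0$ and $j=p^\alpha$, together with $K[x^{p^\gamma}]$-linearity of $\partial_j$ for $j<p^\gamma$ to dispose of the range $k<\alpha$; each case then reduces to inspecting two surviving terms. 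Your reduction to the single coefficient $\binom{p^\alpha+p^\beta}{p^k}$ evaluated by Lucas' theorem is arguably cleaner, and it makes the base-$p$ carry at $\alpha=\beta$ the visible source of the characteristic-$2$ anomaly, whereas the paper's route stays inside the divided-power calculus used throughout the rest of the paper.

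However, there is a genuine flaw at exactly the point you call the only delicate one: you assert that for $p=2$, $\alpha=\beta$, the surviving index $k=\alpha+1$ reproduces ``exactly the separate $p=2$ line of the table,'' yet you never perform the substitution you promise. Carrying it out gives exponent $p^\alpha+p^\beta-p^k=2^\alpha+2^\alpha-2^{\alpha+1}=0$, so your formula yields $\binom{2^{\alpha+1}}{2^{\alpha+1}}x^0=1$, not $x^{p^\alpha}$. This is not a defect of your approach --- $1$ is the correct value, as the instance $\partial_2(x\cdot x)=\binom{2}{2}=1\neq x$ shows --- but it means the fourth line of the lemma is itself erroneous, and the paper's own proof contains the matching slip: in its case $\alpha\leq\beta<k$, the surviving term is $\partial_{p^\alpha}x^{p^\alpha}\cdot\partial_{p^k-p^\alpha}x^{p^\beta}=1\cdot 1=1$, which the paper records as $x^{p^\alpha}$. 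So your write-up, as it stands, claims to confirm a false table entry; carried out honestly, your argument proves the corrected statement with $1$ in place of $x^{p^\alpha}$ in that line. (The discrepancy is immaterial downstream: in Section 4 one always has $\alpha<\beta$, and in Section 3 only the degree of the top term of $\partial_{p^k}\sigma_k$, which comes from the case $k=\alpha<\beta$, enters the estimates.)
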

\begin{proof}
We first note that
$$
\partial_{j} \left( x^{p^\alpha} \right)
=
\left\{
\begin{array}{ll}
x^{p^\alpha} & \text{if } j=0,\\
1, & \text{if } j=p^\alpha,\\
0, & \text{otherwise.}
\end{array}
\right.
$$

Recall that $\partial_j$ is $K[x^{p^\gamma}]$-linear whenever $j<p^\gamma$ so if
$K<\alpha$ we have
$$
\partial_{p^k} \left( x^{p^{\alpha}} x^{p^{\beta}} \right)= x^{p^{\alpha}} \partial_{p^k} x^{p^{\beta}}= x^{p^{\alpha}} x^{p^{\beta}} \partial_{p^k} 1=0.
$$
If $K=\alpha$ we use  \cite[Proposition 2.1]{Lyubeznik} to compute
$$
\partial_{p^k}  \left( x^{p^{\alpha}} x^{p^{\beta}} \right) = \sum_{j=0}^{p^k} \partial_j  x^{p^{\alpha}} \partial_{p^k-j} x^{p^{\beta}}=
\partial_0  x^{p^{\alpha}} \partial_{p^k} x^{p^{\beta}} +  \partial_{p^k}  x^{p^{\alpha}} \partial_{0} x^{p^{\beta}}=
\left\{
\begin{array}{ll}
x^{p^{\alpha}} +  x^{p^{\beta}},  & \text{if } k=\alpha=\beta,\\
 x^{p^{\beta}},  & \text{if } k=\alpha<\beta.
\end{array}
\right.
$$

If $\alpha<\beta=k$ we compute
$$
\partial_{p^k}  \left( x^{p^{\alpha}} x^{p^{\beta}} \right) = \sum_{j=0}^{p^k} \partial_j  x^{p^{\alpha}} \partial_{p^k-j} x^{p^{\beta}}=
\partial_0  x^{p^{\alpha}} \partial_{p^k} x^{p^{\beta}} +  \partial_{p^\alpha}  x^{p^{\alpha}} \partial_{p^k-p^\alpha} x^{p^{\beta}}=
 x^{p^{\alpha}} .$$

If $\alpha\leq \beta<k$ we compute
\begin{eqnarray*}
\partial_{p^k}  \left( x^{p^{\alpha}} x^{p^{\beta}} \right) &=& \sum_{j=0}^{p^k} \partial_j  x^{p^{\alpha}} \partial_{p^k-j} x^{p^{\beta}}\\
&=& \partial_0  x^{p^{\alpha}} \partial_{p^k} x^{p^{\beta}} +  \partial_{p^\alpha}  x^{p^{\alpha}} \partial_{p^k-p^\alpha} x^{p^{\beta}}\\
&=&\left\{
\begin{array}{ll}
x^{p^{\alpha}},   & \text{if } \alpha=\beta=k-1 \text{ and } p=2,\\
0,  & \text{if } \alpha<\beta \text{ or } \beta<k-1 \text{ or } p>2.
\end{array}
\right.
.
\end{eqnarray*}

\end{proof}

\begin{thm}\label{Theorem: Holonomicity does not extend}
The $\mathcal{D}$-module $M$ is not holonomic in the sense of \cite[Definition 3.4]{Lyubeznik}.
\end{thm}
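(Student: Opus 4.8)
The plan is to show that \emph{no} $K$-filtration of $M$ can satisfy the bound $\dim_K M_i\le Ci^n$ in the definition of holonomicity. Since here $R=K[x]$ has a single variable, $n=1$, so holonomicity would require a filtration $M_0\subset M_1\subset\cdots$ with $\dim_K M_i\le Ci$ for all $i$. I will argue by contradiction: assuming such a filtration exists, I will exhibit, for infinitely many levels $\ell$, a family of $K$-linearly independent elements of $M_\ell$ whose number grows like $\ell\log_p\ell$, which is incompatible with the linear bound $\dim_K M_\ell\le C\ell$. Throughout I identify $M$ with $R^2$, write $s_1=(1,0)$ and $s_2=(0,1)$, and use that $\psi(f)=f s_1$ embeds $R$ as a $\mathcal D$-submodule, so that the monomials $x^m s_1=\psi(x^m)$ are $K$-linearly independent in $M$.

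The conceptual heart of the argument is an \emph{efficiency} estimate: a monomial of large degree in the $s_1$-direction is produced by a differential operator of comparatively small order. Fix $d$ with $s_1,s_2\in M_d$. Applying the defining formula $\partial_{p^{n}}(f_1,f_2)=(\partial_{p^{n}}f_1+(\partial_{p^{n}}\sigma_n)f_2,\partial_{p^{n}}f_2)$ to $s_2$ gives $\partial_{p^{n}}s_2=(\partial_{p^{n}}\sigma_n)s_1$, and since $\sigma_n=-\sum_{r=0}^n x^{p^r}x^{p^{2r}}$, Lemma~\ref{Lemma: computation of derivatives} (with $\alpha=r$, $\beta=2r$, $k=n$) shows that the only top-degree contribution comes from $r=n$, namely $\partial_{p^{n}}\sigma_n=-x^{p^{2n}}+(\text{terms }x^{e}\text{ with }e\le p^{n})$. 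Because $\partial_{p^{n}}\in\mathcal F_{p^{n}}$ and $s_2\in M_d$, we have $\partial_{p^{n}}s_2\in M_{p^{n}+d}$, while each lower term $x^{e}s_1$ with $e\le p^{n}$ lies in $\mathcal F_{p^{n}}M_d\subseteq M_{p^{n}+d}$ as well; subtracting, I conclude $x^{p^{2n}}s_1\in M_{p^{n}+d}$ for every $n\ge1$. Multiplying by $x^j\in\mathcal F_j$ then yields $x^{p^{2n}+j}s_1\in M_{p^{n}+d+j}$.

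To convert this into a superlinear lower bound I would fix $\ell=p^{n}+d$ and collect, for each $n'$ with $\lceil n/2\rceil\le n'\le n-1$, the block $B_{n'}=\{\,x^{p^{2n'}+j}s_1 : 0\le j\le p^{n}-p^{n'}\,\}\subset M_\ell$. The restriction $n'\ge n/2$ gives $p^{2n'}\ge p^{n}$, so the exponents appearing in $B_{n'}$, which lie in $[p^{2n'},\,p^{2n'}+p^{n}-p^{n'}]$, satisfy $p^{2n'}+p^{n}-p^{n'}<p^{n}\le p^{2n'}\le p^{2(n'+1)}$ and hence the blocks are pairwise disjoint; therefore all the monomials in $\bigcup_{n'}B_{n'}$ are distinct and $K$-linearly independent. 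Since $|B_{n'}|=p^{n}-p^{n'}+1\ge p^{n-1}$ and there are about $n/2$ admissible values of $n'$, this forces $\dim_K M_{\ell}\gtrsim n\,p^{n}\sim \ell\log_p\ell$ along the subsequence $\ell=p^{n}+d$, contradicting $\dim_K M_\ell\le C\ell$. The main obstacle is the bookkeeping in this last step—choosing the range of $n'$ so that the blocks stay inside $M_\ell$ while remaining pairwise disjoint—together with pinning down the precise form of the lower-order terms in $\partial_{p^{n}}\sigma_n$ from Lemma~\ref{Lemma: computation of derivatives}; the efficiency estimate itself is then immediate.
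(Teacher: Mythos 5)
Your proposal is correct and takes essentially the same approach as the paper: both rest on the observation that $x^j\partial_{p^k}s_2=(x^j\partial_{p^k}\sigma_k)s_1$ has degree $j+p^{2k}$ in the $s_1$-copy of $R$ while costing only $j+p^k$ in the Bernstein filtration, both restrict to $k\geq i/2$ so that these degrees cannot collide, and both count roughly $\tfrac{i}{2}\,p^i$ independent elements at filtration level about $p^i$. The only differences are cosmetic: you subtract the lower-order terms of $\partial_{p^k}\sigma_k$ to work with pure monomials where the paper argues via distinct top degrees, and your displayed chain $p^{2n'}+p^{n}-p^{n'}<p^{n}\le p^{2n'}$ is garbled as written (the intended and correct bound is $p^{2n'}+p^{n}-p^{n'}<2p^{2n'}\le p^{2(n'+1)}$), but the block-disjointness conclusion it serves does hold.
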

\begin{proof}
Let $s_1=(1,0)$ and $s_2=(0,1)$, be the free generators of $M$.
Let $\left\{ \mathcal{F}_i \right\}_{i\geq 0}$ denote the Bernstein filtration of $\mathcal{D}$.
Let $\left\{ M_i  \right\}_{i\geq 0}$ be any $K$-filtration of $M$.
Our aim is to show that $\lim_{i\rightarrow\infty} \dim_K M_i/i=\infty$; we may shift the indices to ensure that  $s_1, s_2\in M_0$
and we henceforth assume that this holds.

Since  $M_i \supseteq \mathcal{F}_i M_0$, it is enough to show that
the function $d(i)= \dim_K \mathcal{F}_i M_0$ is such that
$\lim_{i\rightarrow\infty} d(i)/i = \infty$.

For any pair of integers $(j,k)$ with $j,k \geq 0$, we have $x^j \partial_{p^k} s_2=(x^j\partial_{p^k}\sigma_k) s_1$.
Now consider the set of elements \[E=\{ r_{j k}:=x^j\partial_{p^k}\sigma_k \,|\, j+p^k\leq p^i\}\subseteq  \mathcal{F}_{p^i} M_0\subseteq M_{p^i}.\]
Lemma \ref{Lemma: computation of derivatives} gives
$
\partial_{p^k} \left( x^{p^k} x^{p^{2k}} \right)= x^{p^{2k}}
$
hence $ \deg \partial_{p^k} g_k= p^{2k}$ and
$\deg r_{j k}=j + \deg \partial_{p^k} \sigma_k=j+\deg(\sigma_k)-p^k=j+p^{2k}$.
Hence for all different pairs $(j,k)$ and $(j',k')$ with $K,k'\geq i/2$, we have
\[\deg r_{j k}=j+p^{2k}\neq j'+p^{2k'}=\deg(r_{j' k'})\]
otherwise, $p^{2k}-p^{2k'}=j'-j$ which implies  $p^i|(j'-j)$, contradicting the fact that $j, j^\prime<p^i$.
We deduce that elements $r_{j k}$ of $E$ with $K\geq i/2$ and $j+p^k\leq p^i$ have distinct degrees and hence are linearly independent over $K$.
Let $\lceil i/2\rceil$ denote the least integer greater than or equal to $i/2$. For each $K\geq i/2$, there are $p^i-p^k$ many $r_{j k}$; therefore
\begin{align}
\dim_K \mathcal{F}_{p^i} M_0 & \geq \sum_{i\geq k\geq \lceil i/2\rceil }(p^i-p^k)\notag\\
& =(i-\lceil i/2\rceil+1) p^i-p^{\lceil i/2\rceil}\frac{p^{i-\lceil i/2\rceil+1}-1}{p-1}\notag
\end{align}
which implies that
\[
\lim_{i\rightarrow\infty} \frac{d(p^i)}{p^i}=
\lim_{i\rightarrow\infty} (i-\lceil i/2\rceil+1) + \frac{p^{i-\lceil i/2\rceil+1}-1}{p^{i-\lceil i/2\rceil}(p-1)}=
\infty.
\]

\end{proof}

\section{An example of a holonomic module whose multiplicity does not exist}

Let $M$ be as in the previous section with $g_r$ replaced by $g_r=x^{(p+1)p^r}$.
In this section we show that $\mathcal{D}s_2$, the $\mathcal{D}$-submodule of $M$ generated by $s_2$, is a holonomic $\mathcal{D}$-module for which
$\displaystyle\lim_{i\rightarrow\infty} \frac{\mathcal{F}_is_2}{i}$
does not exist. This gives a negative answer to \cite[Question 2 in \S 4]{Lyubeznik}


We start with the following calculation.

\begin{lem}
\label{partial-on-s2}
Let $K_1,\dots,k_t,e_1,\dots,e_t$ be nonnegative integers.

\begin{enumerate}

\item [(a)]
\[
\partial_{p^k}\sigma_k=
\left\{
\begin{array}{ll}
x^p, & \text{if } k=0,\\
x^{p^{k+1}}+x^{p^{k-1}}, &  \text{otherwise}.
\end{array}
\right.
\]

\item[(b)]
\[
(\partial_{p^{k_1}})^{e_1}\cdots (\partial_{p^{k_t}})^{e_t}s_2=
\left\{
\begin{array}{ll}
s_2, & \text{if } e_1=\cdots e_t=0\\
x^ps_1, & \text{if } t=1, e_1=1, k_1=0\\
(x^{p^{k_t+1}}+x^{p^{k_t-1}})s_1, & \text{if } t=1, e_1=1, k_t\geq 1\\
s_1, & \text{if } t=2, e_1=e_2=1, k_1=k_2+1 \text{ or } k_1=k_2-1\\
0, &  \text{otherwise}.
\end{array}
\right.
\]
\end{enumerate}
\end{lem}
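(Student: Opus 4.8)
The plan is to reduce both formulas to two structural facts about $M$ and then read everything off Lemma~\ref{Lemma: computation of derivatives}. First, the explicit action formula gives $\partial_{p^n}(f,0)=(\partial_{p^n}f,0)$, so on the submodule $Rs_1=\{(f,0)\}$ the operators act by ordinary (divided-power) differentiation of the polynomial. Second, $\partial_{p^k}s_2=\partial_{p^k}(0,1)=\bigl((\partial_{p^k}\sigma_k),0\bigr)=(\partial_{p^k}\sigma_k)s_1$, so the very first differentiation of $s_2$ lands in $Rs_1$, after which one is merely differentiating polynomials. These two remarks turn the whole computation into bookkeeping with the basic rule recorded at the start of the proof of Lemma~\ref{Lemma: computation of derivatives}: for $a\geq 0$ one has $\partial_{p^a}(x^{p^m})=1$ when $a=m$ and $\partial_{p^a}(x^{p^m})=0$ when $a\neq m$, and in particular $\partial_{p^a}(c)=0$ for every constant $c$.

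For part (a) I would write $\sigma_k=-\sum_{r=0}^k g_r$ with $g_r=x^{(p+1)p^r}=x^{p^r}x^{p^{r+1}}$, so that each summand has the shape $x^{p^\alpha}x^{p^\beta}$ with $\alpha=r<\beta=r+1$. Applying Lemma~\ref{Lemma: computation of derivatives} termwise, the only $r$ with $\partial_{p^k}(x^{p^r}x^{p^{r+1}})\neq 0$ are $r=k$ (the case $k=\alpha<\beta$, contributing $x^{p^{k+1}}$) and $r=k-1$ (the case $\alpha<\beta=k$, contributing $x^{p^{k-1}}$). Summing gives the two monomials $x^{p^{k+1}}+x^{p^{k-1}}$ for $k\geq 1$, while for $k=0$ only the term $r=0$ survives and produces the single monomial $x^p$; this is the asserted formula.

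For part (b) I would expand the operator into a string of $N=e_1+\cdots+e_t$ elementary operators $\partial_{p^{a_1}}\cdots\partial_{p^{a_N}}$, listing each $k_j$ with multiplicity $e_j$ and discarding the factors with $e_j=0$, and prove the cleaner statement organized by $N$. The engine is the rule above: the first operator sends $s_2$ to a $K$-combination of at most two pure monomials $x^{p^m}s_1$ (by part (a)); a second operator sends each such $x^{p^m}$ either to the constant $1$, when its index matches $m$, or to $0$; and any further operator annihilates a constant multiple of $s_1$ since $\partial_{p^a}(1)=0$. Hence $N=0$ yields $s_2$; $N=1$ yields $(\partial_{p^{a_1}}\sigma_{a_1})s_1$, which by part (a) is exactly the two $t=1$ rows; $N=2$ yields $s_1$ precisely when the two indices are adjacent, $|a_1-a_2|=1$ (equivalently $k_1=k_2\pm 1$ in the reduced $t=2$ case), and $0$ otherwise; and $N\geq 3$ always yields $0$. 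Translating back to the unreduced tuples $(k_j,e_j)$ reproduces the displayed cases, with every configuration not listed there, such as a single $\partial_{p^k}$ squared or two non-adjacent indices, landing in the vanishing $N=2$ or $N\geq 3$ regime.

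I expect the only genuine obstacle to be the bookkeeping in part (b): making the passage from the raw operator $(\partial_{p^{k_1}})^{e_1}\cdots(\partial_{p^{k_t}})^{e_t}$ to the string $a_1,\dots,a_N$ precise, correctly absorbing the degenerate $k=0$ case of part (a) where the second monomial $x^{p^{k-1}}$ is absent, and verifying that adjacency is exactly the condition separating the nonzero $N=2$ output from the zero one. Everything beyond this is a direct appeal to the termwise derivative rule.
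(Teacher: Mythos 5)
Your proposal is correct and takes essentially the same approach as the paper: part (a) is the identical termwise application of Lemma~\ref{Lemma: computation of derivatives} to the summands $g_r=x^{p^r}x^{p^{r+1}}$ of $\sigma_k$, and your $N$-step bookkeeping in part (b) just spells out what the paper compresses into ``this follows immediately from (a)''. The only shared caveat is that both you and the paper silently drop the overall minus sign coming from $\sigma_k=-\sum_{r=0}^k g_r$, which is harmless for the dimension counts that follow.
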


\begin{proof}[Proof]
\begin{enumerate}

\item [(a)]
Lemma \ref{Lemma: computation of derivatives} implies that for $K\geq 0$,
$\partial_{p^k} g_r$ vanishes when $r+1<k$, that $\partial_{p^k} g_{k-1}=x^{p^{k-1}}$ and that  $\partial_{p^k} g_{k}=x^{p^{k-1}}=x^{p^{k+1}}$
hence
\[
\partial_{p^k}\sigma_k=
\left\{
\begin{array}{ll}
x^p, & \text{if } k=0,\\
x^{p^{k+1}}+x^{p^{k-1}}, &  \text{otherwise}.
\end{array}
\right.
\]

\item[(b)]
 This follows immediately from (a).
\end{enumerate}
\end{proof}

\begin{thm}
Let $S$ denote $\mathcal{D}s_2$ and $S_i$ denote $\mathcal{F}_is_2$.Then
\[\dim_k(S_i)=
\left\{
\begin{array}{ll}
2i+p^{e+1}-p^e+2, & \text{if } p^{e+1}-p^e+p^{e-1}\leq i<p^{e+1};\\3i-p^{e-1}+3,& \text{if } p^e\leq i< p^{e+1}-p^e+p^{e-1},
\end{array}
\right.
\]
where $e$ is the unique integer such that $p^e\leq i<p^{e+1}$.
Consequently, $S$ is holonomic and $\displaystyle\lim_{i\to \infty}\frac{\dim_k(S_i)}{i}$ does not exist.
\end{thm}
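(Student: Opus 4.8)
The plan is to compute $\dim_k(S_i)$ exactly by producing an explicit $K$-basis of $S_i = \mathcal{F}_i s_2$ for each $i$, and then to read off the failure of $\lim_{i\to\infty}\dim_k(S_i)/i$ from the piecewise formula. Since $S = \mathcal{D}s_2$, a spanning set for $\mathcal{F}_i s_2$ consists of all elements $x^j\,(\partial_{p^{k_1}})^{e_1}\cdots(\partial_{p^{k_t}})^{e_t}\,s_2$ whose total Bernstein weight $j + \sum_r e_r p^{k_r}$ is at most $i$. Lemma \ref{partial-on-s2}(b) is the key simplification: it collapses every application of a product of the $\partial_{p^k}$'s to $s_2$ down to one of only a few shapes, namely $s_2$ itself, $x^p s_1$, $(x^{p^{k+1}}+x^{p^{k-1}})s_1$ (for $k\ge 1$), or $s_1$ (when two derivatives with adjacent exponents are applied), with everything else vanishing. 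Thus every nonzero generator of $S_i$ is, up to the $x^j$ prefactor, one of these finitely many elements, and $S_i$ is spanned by $x$-multiples of them.

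First I would organize the spanning set by the two components of $M = R s_1 \oplus R s_2$. The only contribution to the $s_2$-component comes from the trivial operator, giving the $K$-span of $\{x^j s_2 : 0 \le j \le i\}$, which has dimension $i+1$. The $s_1$-component is where the arithmetic lives: I would list, over all admissible $(k,j)$ with the weight bound $j + p^k \le i$ (single derivative) or $j + p^{k_1}+p^{k_2}\le i$ (two adjacent derivatives), the resulting monomials $x^j\cdot x^p$, $x^{j}(x^{p^{k+1}}+x^{p^{k-1}})$, and $x^j\cdot 1$, then determine the dimension of the $K$-subspace of $R s_1$ they span. The essential subtlety is that the image $(x^{p^{k+1}}+x^{p^{k-1}})s_1$ is a \emph{binomial}, so linear dependences can arise when the high-degree term $x^{j+p^{k+1}}$ of one generator coincides with a degree already accounted for by another generator (either a single-derivative term or the leading term of another binomial). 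Carefully tracking which total degrees $j + p^{k\pm 1}$ are achievable, and with what multiplicity, under the weight constraint is what produces the two regimes in the stated formula.

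The main obstacle, and the part requiring genuine care rather than routine bookkeeping, is exactly this degree-overlap analysis for the binomial generators, together with pinning down the correct ranges of $i$. Writing $e$ for the integer with $p^e \le i < p^{e+1}$, one must separate the case where $i$ is large enough (relative to $p^e$) that the highest-order derivatives $\partial_{p^{e+1}}$ and $\partial_{p^e}$ can be applied within the weight budget — contributing extra binomial terms — from the case where the budget cuts these off; this is the origin of the threshold $p^{e+1}-p^e+p^{e-1}$ separating the slope-$2$ regime from the slope-$3$ regime. I would handle this by counting, for each fixed total degree $d$ of $x^d s_1$, exactly how many and which generators reach it, reducing the dimension count to counting attainable top-degrees. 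Once the piecewise formula is established, holonomicity is immediate since each branch is linear in $i$ and hence bounded by $Ci^{\,n}=Ci$ (here $n=1$), satisfying Definition~\ref{...}. Finally, to conclude that the multiplicity does not exist, I would evaluate $\dim_k(S_i)/i$ along two subsequences: at $i = p^{e+1}-1$ the formula gives slope approaching $2$, while at $i = p^e$ it approaches $3$, so the two accumulation points $2$ and $3$ show the limit fails to exist.
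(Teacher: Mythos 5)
Your overall strategy is the same as the paper's: reduce $S_i=\mathcal{F}_i s_2$ to the spanning set $x^j(\partial_{p^{k_1}})^{e_1}\cdots(\partial_{p^{k_t}})^{e_t}s_2$ of weight at most $i$, collapse it via Lemma \ref{partial-on-s2}(b) to the handful of shapes $x^js_2$, $x^jx^ps_1$, $x^j(x^{p^{k+1}}+x^{p^{k-1}})s_1$, $x^js_1$, split off the $s_2$-component of dimension $i+1$, and then count attainable degrees in the $s_1$-component, with the two regimes arising from whether the top binomial generators' degree ranges overlap the rest. That is precisely the paper's decomposition into $V_1$, $V_2=V_{2,0}+\cdots+V_{2,e}$ and $V_3$, so on this part the only criticism is that the decisive overlap analysis (the paper's Cases 1 and 2, producing the threshold $p^{e+1}-p^e+p^{e-1}$) is described but not actually carried out. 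The holonomicity step is fine: both branches are bounded by $Ci$ with $n=1$.

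However, your final step contains a genuine error. You read the two branches as ``slope $2$'' and ``slope $3$'' and conclude that $\dim_k(S_i)/i$ has accumulation points $2$ and $3$. But the additive terms $p^{e+1}-p^e+2$ and $-p^{e-1}+3$ are \emph{not} bounded constants: they are of order $i$ itself, so they contribute to the limit of the ratio. Concretely, along your subsequence $i=p^{e+1}-1$ (first branch)
\[
\frac{\dim_k(S_i)}{i}=2+\frac{p^{e+1}-p^e+2}{p^{e+1}-1}\longrightarrow 2+\Bigl(1-\frac{1}{p}\Bigr)=3-\frac{1}{p},
\]
while along your subsequence $i=p^e$ (second branch)
\[
\frac{\dim_k(S_i)}{i}=3-\frac{p^{e-1}-3}{p^{e}}\longrightarrow 3-\frac{1}{p}.
\]
These two limits are \emph{equal}, so your pair of subsequences proves nothing about non-existence of the limit. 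The paper instead compares $i=p^e$ (ratio $\to 3-1/p$) with $i=p^{e+1}-p^e$ (ratio $\to 3-1/(p^2-p)$), which are distinct for $p>2$; alternatively, taking $i$ at the threshold $p^{e+1}-p^e+p^{e-1}$ gives ratio $\to 3-1/(p^2-p+1)$, which differs from $3-1/p$ for every $p$ since $(p-1)^2>0$. Either choice repairs your argument, but as written the last assertion of the theorem is not proved.
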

\begin{proof}[Proof]
Consider a general element $x^j(\partial_{p^{k_1}})^{e_1}\cdots (\partial_{p^{k_t}})^{e_t}s_2$ with $j+\sum^t_{i=0}e_ip^{k_i}\leq i$ in $S_i$.
Lemma \ref{partial-on-s2} shows that $x^j(\partial_{p^{k_1}})^{e_1}\cdots (\partial_{p^{k_t}})^{e_t}s_2$ equals
$$\left\{
\begin{array}{lll}
x^js_2, & \text{with } 0\leq j\leq i, & \text{if } e_1=\cdots e_t=0\\
x^jx^ps_1, & \text{with } 0\leq j\leq i-1, & \text{if } t=1, e_1=1, k_1=0\\
x^j(x^{p^{k_1+1}}+x^{p^{k_1-1}})s_1, & \text{with }0\leq j\leq i-p^{k_1} , & \text{if } t=1, e_1=1, k_1\geq 1 \\
x^js_1, & \text{with } 0\leq j\leq i-p^{k_2}-p^{k_2+1}, & \text{if } t=2, e_1=e_2=1, k_1=k_2+1\\ x^js_1, & \text{with } 0\leq j\leq i-p^{k_2}-p^{k_2-1}, & \text{if } t=2, e_1=e_2=1, k_1=k_2-1\\
0, &  \text{otherwise}. &
\end{array}
\right.
$$

From this we can see that, among
$\{x^j(\partial_{p^{k_1}})^{e_1}\cdots (\partial_{p^{k_t}})^{e_t}s_2|j+\sum^t_{i=0}e_ip^{k_i}\leq i\}$,
there are three types of elements:
elements obtained when $t=0$ (i.e., $e_1=\cdots e_t=0$), elements obtained when $t=1$, and elements obtained when $t=2$.
Let $V_1$ be the $K$-span of all elements of the first type, i.e.,
\[V_1=K\langle x^js_2|0\leq j\leq i\rangle .\]
Let $V_2$ be the $K$-span of all elements of the second type, i.e.,
\[V_2=K\langle x^jx^ps_1|0\leq j\leq i-1\rangle + K\langle x^j(x^{p^{k+1}}+x^{p^{k-1}})|0\leq j+p^k\leq i, k\geq 1\rangle.\]
Let $V_3$ be the $K$-span of all elements of the third type, i.e.,
\[V_3=K\langle x^js_1|0\leq j\leq i-1-p\rangle.\]

It should be pointed out that all elements of the first type are in the copy of $R$ generated by $s_2$ and all elements of the other two types are in the copy of $R$
generated by $s_1$; and hence
$\dim_K(\mathcal{F}_is_2)=\dim_K(V_1)+\dim_K(V_2+V_3)$ since $s_1$ and $s_2$ are linearly independent.
It is clear that $V_1$ is a $(i+1)$-dimensional $K$-vector space, it remains to calculate $\dim_K(V_2+V_3)$.

To calculate the dimension of $V_2+V_3$, we break $V_2$ into pieces as follows:
\begin{eqnarray*}
V_{2,0}&=&K\langle x^jx^ps_1|0\leq j\leq i-1\rangle \\
V_{2,1}&=&K\langle x^j(x^{p^2}+x)s_1|0\leq j\leq i-p\rangle \\
& \vdots & \\
V_{2,e-1}&=&K\langle x^j(x^{p^e}+x^{p^{e-2}})s_1|0\leq j\leq i-p^{e-1}\rangle \\
V_{2,e}&=&K\langle x^j(x^{p^{e+1}}+x^{p^{e-1}})s_1|0\leq j\leq i-p^{e}\rangle
\end{eqnarray*}
If $i\geq 2p+1$, $V_{2,0}+V_3=K\langle x^js_1|0\leq j\leq i-1+p\rangle$.
Since $V_{2,0}+V_3$ contains $x^jxs_1$ with $0\leq j\leq i-p$, we see that $V_{2,1}+V_{2,0}+V_3$ contains $x^jx^{p^2}s_1$ with $0\leq j\leq i-p$.
Consequently
\[V_{2,1}+V_{2,0}+V_3=K\langle x^js_1|0\leq j\leq i-p+p^2\rangle.\]
Similarly, we have
\[V_{2,e-1}+\cdots+V_{2,0}+V_3=K\langle x^js_1|0\leq j\leq i-p^{e-1}+p^e\rangle.\]

It remains to analyze $V_{2,e}=K\langle x^j(x^{p^{e+1}}+x^{p^{e-1}})s_1|0\leq j\leq i-p^{e}\rangle$. There are two cases:
\begin{enumerate}
\item[\underline{Case 1:}]
$i\geq p^{e+1}-p^e+p^{e-1}$, i.e. $[p^{e},i-p^{e-1}+p^e]\cap[p^{e+1},i-p^e+p^{e+1}]\neq \emptyset$.
In this case, similar to the consideration of $V_{2,e-1}+\cdots+V_{2,0}+V_3$,
 we can see that $V_{2,e}+\cdots+V_{2,0}+V_3$ consists of all polynomials of degree less than or equal to $i-p^e+p^{e+1}$.
Therefore,
\[\dim_K(V_2+V_3)=i-p^e+p^{e+1}+1,\]
and hence
\[\dim_K(\mathcal{F}_is_2)=\dim_K(V_1)+\dim_K(V_2+V_3)=2i-p^e+p^{e+1}+2.\]

\item[\underline{Case 2:}]
$i< p^{e+1}-p^e+p^{e-1}$, i.e. $[p^{e},i-p^{e-1}+p^e]\cap[p^{e+1},i-p^e+p^{e+1}]= \emptyset$.
In this case the degrees of the basis elements of  $V_{2,e}$ (which are all distinct) exceed the degree of any element in
$V_{2,e-1}+\cdots+V_{2,0}+V_3$ thus
$\dim_K V_{2,e} + V_{2,e-1}+\cdots+V_{2,0}+V_3= \dim_K V_{2,e} + \dim_K V_{2,e-1}+\cdots+V_{2,0}+V_3=(i-p^{e-1}+p^e+1)+(i-p^e+1)$ and
\[\dim_K(\mathcal{F}_is_2)=(i+1)+(i-p^{e-1}+p^e+1)+(i-p^e+1)=3i-p^{e-1}+3.\]
\end{enumerate}
We note that for all $i$, we have $\dim_K(S_i)\leq 4i$, therefore $S=\mathcal{D}s_2$ is holonomic. \par

For all $i$ of the form $p^e$, we have
\[\lim_{e\to \infty}\frac{\dim_K(S_{p^e})}{p^e}=\lim_{e\to \infty}\frac{3p^e-p^{e-1}+3}{p^e}=3-\frac{1}{p};\]
but for all $i$ of the form $p^{e+1}-p^e$, we have
 \[\lim_{e\to \infty}\frac{\dim_K(S_{p^{e+1}-p^e})}{p^{e+1}-p^e}=\lim_{e\to \infty}\frac{3(p^{e+1}-p^e)-p^{e-1}+3}{p^{e+1}-p^e}=3-\frac{1}{p^2-p}.\]
Therefore, $\displaystyle\lim_{i\to \infty}\frac{\dim_K(S_i)}{i}$ does not exist.
\end{proof}

\begin{rmk}\hfil
\begin{enumerate}
\item The proof of the previous theorem shows that $M=\mathcal{D}s_2$ is a \emph{cyclic} $\mathcal{D}$-module .
\item A reasonable theory of holonomic modules should include the polynomial ring itself as a holonomic module.
However our example in this section indicates that any such theory of holonomic modules can not have both the extension property and the existence of multiplicity
at the same time. If extensions of holonomic modules are holonomic, then our module $M=\mathcal{D}s_2$ in this section will be holonomic,
but as we have seen the multiplicity of $M$ does not exist.
\end{enumerate}
\end{rmk}

\subsection*{Acknowledgments.} The results in this paper were obtained while the first and third authors enjoyed the hospitality of the School of Mathematics at the University of Minnesota.


\begin{thebibliography}{L}

\bibitem{Bavula}
Vladimir V.~Bavula.
\emph{Dimension, multiplicity, holonomic modules, and an analogue of the inequality of {B}ernstein
for rings of differential operators in prime characteristic.}
Representation Theory.  {\bf 13}, 2009, 182--227.

\bibitem{Bogvad}
Rikard B\"ogvad.
\emph{An analogue of holonomic $\mathcal{D}$-modules on smooth varieties in positive characteristics.}
The Roos Festschrift volume 1.  Homology Homotopy Appl. {\bf 4}  (2002),  no.~2, part 1, 83--116.

\bibitem{Lyubeznik}
Gennady Lyubeznik.
\emph{A characteristic-free proof of a basic result on D-modules,}
arXiv:1004.4225.
\end{thebibliography}
\end{document}